\newtheorem{theorem}{Theorem}[section]
\newtheorem{corollary}[theorem]{Corollary}
\newtheorem{proposition}[theorem]{Proposition}
\newenvironment{proof}{{\bf Proof.}}{\hfill$\Box$\\}
\newenvironment{remark}{{\vskip 1ex\noindent\bf Remark.}}{\\}
\newenvironment{definition}{{\vskip 1ex\noindent\bf Definition.}}{\\}
\newcommand{\R}{\mathbb{R}}
\newcommand{\tr}{\mathrm{tr}}
\newcommand{\Id}{\mathrm{Id}}
\newcommand{\const}{\mathrm{const}}
\newcommand{\spn}{\mathrm{span}}
\newcommand{\FF}{\mathcal{F}}
\newcommand{\DD}{\mathcal{D}}
\newcommand{\VV}{\mathcal{V}}
\newcommand{\AAA}{\mathcal{A}}
\newcommand{\nablac}{{\nabla}^C}
\title{{\bf Webs and the Pleba\'nski equation}}
\author{
Wojciech Kry\'nski\thanks{
{\bf Institute of Mathematics, Polish Academy of Sciences, ul.~\'Sniadeckich 8, 00-956 Warszawa, Poland}
\newline 
E-mail: krynski@impan.pl
}
}
\begin{document}
\maketitle
\begin{abstract}
We consider 3-webs, hyper-para-complex structures and integrable Segre structures on manifolds of even dimension and generalise the second heavenly Pleba\'nski equation in the context of higher-dimensional hyper-para-complex structures. We also characterise the Segre structures admitting a compatible hyper-para-complex structure in terms of systems of ordinary differential equations.
\end{abstract}

\section{Introduction}\label{sec1}

\paragraph{Results.}
A web on a manifold is a family of foliations. In the present paper we consider classical 3-webs on $2m$-dimensional manifolds and exploit connections between them, hyper-para-complex structures and $\alpha$-integrable $(m,2)$-Segre structures. In dimension 4 the structures coincide with the hyper-Hermitian metrics of the neutral signature. The geometry of webs is an interesting subject on its own (see the survey \cite{AG}). Webs are also immanently connected to integrable systems and naturally appear as underlying structures in various problems in differential geometry \cite{DK1,GZ,K2,Z}. For instance, Veronese webs in dimension 3 can be used as an alternative description of the hyper-CR Einstein-Weyl structures \cite{D2,DK1} (see also \cite{Ca,DK2,FK,Z} and \cite{K3} for higher-dimensional generalisations), or 3-webs on a plane give rise to the projective structures with skew-symmetric Ricci tensors \cite{K2} (see also \cite{De,R}). The webs provide a very useful description of the structures.

Our main aim in the present paper is to generalise the Pleba\'nski equation \cite{P} to higher dimensions in the context of the integrable Segre structures. We use our earlier results on webs \cite{K1} and prove that any solution $\theta=(\theta_{ij})$, $\theta_{ij}\colon\R^{2m}\to\R$, $i,j=1,\ldots,m$, $i<j$ to the overdetermined system
$$
\begin{aligned}
\sum_{p,q=1}^m(-1)^{p+q}\left((\partial_{x_i}\partial_{x_p}\theta_{pl})(\partial_{x_j}\partial_{x_q}\theta_{qk}) -(\partial_{x_j}\partial_{x_p}\theta_{pl})(\partial_{x_i}\partial_{x_q}\theta_{qk})\right)=&\\ =\partial_{y_j}\partial_{x_i}\theta_{kl}-\partial_{y_i}\partial_{x_j}\theta_{kl},\qquad i<j,\ k<l,
\end{aligned}
$$
give rise to a Ricci-flat hyper-para-complex structure on $\R^{2m}$. More detailed formulation of the result is contained in Section \ref{sec3} (Theorem \ref{thm3}) and there is one additional term that depends on arbitrary functions in the full version of the system. Our main tool is the Chern connection of webs \cite{N}. The Chern connection in dimension 4 is a canonical Weyl connection uniquely associated to a hyper-Hermitian metric via a fibration of the associated twistor space \cite{DW2,Pe}. 

In the second part of the paper we consider systems of second order ordinary differential equations (ODEs). However, we do not use point transformations as in \cite{CDT,G} but consider more rigid notion of equivalence that essentially goes back to Chern \cite{C}. As a consequence we are able to state a correspondence between classes of ODEs and hyper-Hermitian or hyper-K\"ahler structures in dimension 4 (Theorem \ref{thm4}). A characterisation of the hyper-Hermitian and the hyper-K\"ahler structures in terms of the point invariants of ODEs is an outstanding open problem. We conclude the paper with a direct construction of bi-Hamiltonian structures from $\alpha$-integrable Segre structures (this is a construction analogous to a construction proposed in \cite{DK1} in dimension 3).

\paragraph{The plan of the paper.}
In the next section we briefly describe all geometric structures that appear later on. In particular we consider: 3-webs, Segre structures, hyper-para-complex structures and Kronecker webs. The geometry behind all these structures is similar, however there are subtle, important differences. Moreover, there are various non-equivalent definitions of the structures in the literature. Therefore, we clarify the terminology adapted in the present paper (which is close to \cite{AMT}) and emphasize the differences among the structures. 

Section \ref{sec3} contains our generalisation of the Pleba\'nski equation. In Section \ref{sec4} we consider systems of ODEs. Finally, Section \ref{sec5} is an appendix containing useful formulae for the torsion and the curvature of the Chern connection in coordinates adapted to webs.

\section{Geometric structures}\label{sec2}

\paragraph{3-webs and the Chern connection.} 
A 3-web $\{\FF_1,\FF_2,\FF_3\}$ on a manifold $M$ of dimension $2m$ is a triple of $m$-dimensional foliations in the general position meaning that any two of them intersect transversally at each point of $M$. One associates the Chern connection to a 3-web. It is defined by the following formula (see \cite{N})
\begin{equation}\label{chern_def}
\begin{aligned}
\nablac_XY=&\pi_h(j[\pi_h(X),j\pi_h(Y)]+[\pi_v(X),\pi_h(Y)])\\
&+\pi_v(j[\pi_v(X),j\pi_v(Y)]+[\pi_h(X),\pi_v(Y)])
\end{aligned}
\end{equation}
where $\pi_h\colon TM\to T\FF_2$ is the projection to $T\FF_2$ along $T\FF_1$, $\pi_v\colon TM\to T\FF_1$ is the projection to $T\FF_1$ along $T\FF_2$ and $j\colon TM\to TM$ is a vector bundle isomorphism defined uniquely by
\begin{equation}\label{end_j_form}
\begin{aligned}
j\pi_h(X)\in T\FF_1,&\qquad j \pi_h(X)-\pi_h(X)\in T\FF_3, \\
j\pi_v(X)\in T\FF_2,&\qquad j\pi_v(X)-\pi_v(X)\in T\FF_3
\end{aligned}
\end{equation}
for any $X\in TM$. One can verify that the Chern connection is the unique affine connection on $M$ such that
\begin{enumerate}
\item[(C1)] all leaves of $\FF_1$, $\FF_2$ and $\FF_3$ are parallel with respect to $\nablac$,
\item[(C2)] torsion $T(\nablac)(X,Y)$ is normalised such that it vanishes for all $X$ tangent to $\FF_1$ and $Y$ tangent to $\FF_2$.
\end{enumerate}
Note that $\{\FF_1,\FF_2,\FF_3\}$ reduces the bundle of all frames on $M$ to a $GL(m)$-bundle (e.g. fixing a frame in $T\FF_1$ one uses $j$ to extend it to a frame in $T\FF_2$ and gets a frame in $TM$).

\paragraph{$(m,2)$-Segre structures.}
A $(m,n)$-Segre structure on a manifold $M$ of dimension $mn$ is a smooth field of cones $S(x)\subset T_xM$ in the tangent spaces of $M$, each linearly isomorphic to the Segre cone of linear maps $\R^m\to\R^n$ of rank one \cite{M}.  The structures are sometimes called almost-Grassmann structures and we refer to \cite{AG0,G} for more information on their geometry. In the present paper we consider the $(m,2)$-Segre structures. Then, any tangent space $T_xM$ is identified with $\R^m\otimes\R^2$. The decomposition defines a one-parameter family of special $m$-dimensional subspaces of $T_xM$, called $\alpha$-planes, by the formula $\R^m\otimes v$, where $v\in\R^2\setminus\{0\}$, at each point $x\in M$ (analogously one has $(m-1)$-parameter family of $\beta$-planes $w\otimes\R^2$, where $w\in\R^m\setminus\{0\}$).

An $\alpha$-submaniofld of $M$ is an $m$-dimensional submanifold of $M$ such that its all tangent spaces are $\alpha$-planes. A Segre structure is $\alpha$-integrable if all $\alpha$-planes are tangent to $\alpha$-submanifolds.

In the particular case of $m=2$ the Segre structures coincide with the conformal metrics of neutral signature $(2,2)$ on 4-dimensional manifolds. The seminal result of Penrose \cite{Pe} states that the $\alpha$-integrability in this case is equivalent to the fact that a structure is anti-self-dual. We refer to \cite{DW2} for more information on the anti-self-dual structures in the neutral signature.

\paragraph{Hyper-para-complex structures.}
A hyper-para-complex structure on a manifold $M$ of dimension $2m$ is a triple of anti-commuting endomorphism $I,K,J\colon TM\to TM$ such that $J$ is a complex structure, $I$ and $K$ are para-complex structures and
$$
IK=-KI=J.
$$
We refer to \cite{AMT} for more information on the structures. Let us point out, that it follows from the definition that at each point $x\in M$ both $I$ and $K$ have $m$-dimensional eigen-spaces corresponding to $1$ and $-1$, denoted $\DD_I^+(x)$, $\DD_I^-(x)$ and $\DD_K^+(x)$, $\DD_K^-(x)$, respectively. All distributions $\DD_I^\pm$ and $\DD_K^\pm$ are integrable.

\paragraph{Hyper-para-complex Segre structures.}
Let $M$ be a manifold of dimension $2m$. We shall say that a hyper-para-complex structure $(I,K,J)$ is compatible with a $(m,2)$-Segre structure $S$ if all $I$, $J$ and $K$ preserve the Segre cones $S(x)$, $x\in M$. The key ingredient of the following theorem is \cite[Theorem 4.14]{N} (see also \cite{AG0}).
\begin{theorem}\label{thm1}
Let $\{\FF_1,\FF_2,\FF_3\}$ be a 3-web on a manifold $M$ of dimension $2m$. Then there is a unique $(2,m)$-Segre structure $S$ on $M$ such that the tangent spaces to all $\FF_i$, $i=1,2,3$, are $\alpha$-planes of $S$. Moreover, if the torsion of the Chern connection $\nablac$ vanishes then $S$ is $\alpha$-integrable and admits a compatible hyper-para-complex structure. Conversely, all hyper-para-complex structures compatible with a Segre structure on $M$ can be locally obtained in this way. 
\end{theorem}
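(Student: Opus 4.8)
The plan is to split the claim into four parts: the pointwise construction and uniqueness of $S$; the construction of an almost-hyper-para-complex structure from an arbitrary $3$-web, together with the observation that the Chern connection always parallelises it; the integrability of this structure under the torsion hypothesis; and the converse, via the para-analogue of the Obata connection. For the first part I would recall the pointwise model underlying \cite[Theorem 4.14]{N} (see also \cite{AG0}): three pairwise transverse $m$-planes $V_1,V_2,V_3\subset T_xM$ give, on writing $T_xM=V_1\oplus V_2$ and regarding $V_3$ as the graph of an isomorphism $V_1\to V_2$, an identification $T_xM\cong\R^m\otimes\R^2$ under which $V_1,V_2,V_3$ become the $\alpha$-planes $\R^m\otimes e_1$, $\R^m\otimes e_2$ and $\R^m\otimes(e_1+e_2)$. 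This defines the cone field $S$; uniqueness follows because the remaining freedom acts only inside the $\R^m$-factor (a configuration of three points of $\mathbb{P}^1$ has trivial projective stabiliser), so $S(x)$ is independent of the choices, and smoothness is clear. That vanishing of the torsion of $\nablac$ forces $S$ to be $\alpha$-integrable is part of \cite[Theorem 4.14]{N}; alternatively it follows from the fact, established below, that $\nablac$ is then a torsion-free connection compatible with $S$.

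Next I would set $K:=\pi_v-\pi_h$, $I:=j$ and $J:=IK$. From $\pi_h^2=\pi_h$, $\pi_v^2=\pi_v$, $\pi_h\pi_v=\pi_v\pi_h=0$ and the relations \eqref{end_j_form} a short computation gives $K^2=\Id$, $j^2=\Id$ and $jK=-Kj$, hence $J^2=-\Id$, so $(I,K,J)$ satisfies the split-quaternion relations $IK=-KI=J$; and since in a web-adapted frame each of $I,K,J$ acts only on the $\R^2$-factor, all three preserve $S$, i.e.\ the structure is compatible with $S$. Here $\DD_K^\pm$ are $T\FF_1,T\FF_2$, while $\DD_I^\pm$ are $\ker(j-\Id)$ and $T\FF_3$. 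The point I would stress --- and which holds for \emph{every} web --- is that $\nablac$ parallelises $I,K,J$: it preserves $T\FF_1$ and $T\FF_2$, hence the projections $\pi_h,\pi_v$ and therefore $K$; and since it also preserves the subbundle $T\FF_3=\{w+g(w):w\in T\FF_1\}$ (the graph of an isomorphism $g\colon T\FF_1\to T\FF_2$) while preserving the splitting $T\FF_1\oplus T\FF_2$, differentiating a section $w+g(w)$ of $T\FF_3$ forces $\nablac g=0$, hence $\nablac j=0$ and $\nablac J=0$.

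Granting now that the torsion of $\nablac$ vanishes, integrability is immediate: $\DD_K^\pm$ and $\DD_I^\pm$ are $\nablac$-parallel and a parallel subbundle of a torsion-free connection is involutive, so $I$ and $K$ are (integrable) para-complex structures; moreover $N_J=0$ because $\nablac J=0$ and $\nablac$ is torsion-free. Hence $(I,K,J)$ is a hyper-para-complex structure compatible with $S$. For the converse, given a hyper-para-complex $(I,K,J)$ compatible with $S$ on $M$, I would take $\FF_1,\FF_2$ to be the foliations integrating $\DD_K^\pm$ and $\FF_3$ the one integrating $\DD_I^-$; these are pairwise transverse, because a common eigenvector of two of $I,K,J$ would be an eigenvector of the third, which is impossible since $J^2=-\Id$, and by compatibility they are $\alpha$-planes of $S$ (this uses the description of the automorphisms of the Segre cone, cf.\ \cite{M}, \cite{N}), so the Segre structure associated to this web by the first part is exactly $S$. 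To see that the Chern connection of the web is torsion-free I would bring in the para-Obata connection $\nabla^O$ of $(I,K,J)$ --- the unique torsion-free connection with $\nabla^O I=\nabla^O K=\nabla^O J=0$, which exists precisely because the structure is integrable (the para-analogue of Obata's theorem, cf.\ \cite{AMT}): it parallelises the eigendistributions, hence the three foliations, so it satisfies (C1), and being torsion-free it satisfies (C2) trivially; by the uniqueness characterisation of the Chern connection recalled above, $\nablac=\nabla^O$. Finally one checks that applying the construction of the second paragraph to this web returns $(I,K,J)$ (indeed $\pi_v-\pi_h=K$ since their eigendistributions are $\DD_K^\pm$, and $T\FF_3=\DD_I^-$ is the graph of $-I|_{\DD_K^+}$, which gives back $j=I$), which would complete the proof.

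I expect the main obstacle to be the assertion in the second paragraph that $\nablac$ parallelises the \emph{entire} triple $(I,K,J)$ rather than only the three foliations --- the identity $\nablac g=0$ is its crux --- together with the two bookkeeping points in the converse: that the eigendistributions chosen are genuinely $\alpha$-planes of $S$, and that the Chern connection coincides with $\nabla^O$.
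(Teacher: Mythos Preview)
Your proof is correct and covers all the claims, but it proceeds along a somewhat different line from the paper's.  The paper defines $I$ via the splitting $T\FF_1\oplus T\FF_2$ and $K$ via $\VV_1\oplus\VV_{-1}$ (so your $I$ and $K$ are interchanged, which is harmless), and obtains the integrability of the fourth eigendistribution $\VV_{-1}$ directly from \cite[Theorem~4.14]{N}, which says that torsion-freeness of $\nablac$ is equivalent to the integrability of \emph{all} the $\alpha$-distributions $\VV_t$; compatibility with $S$ is then checked by the explicit formulae $I(\VV_t)=\VV_{-t}$ and $K(\VV_t)=\VV_{1/t}$.  Your route --- showing once and for all that $\nablac$ parallelises the whole triple $(I,K,J)$ (the step $\nablac j=0$ from the graph description of $T\FF_3$ is indeed the crux, and your argument for it is valid) and then invoking that parallel subbundles of a torsion-free connection are involutive --- is a clean alternative that yields $\nablac I=\nablac K=\nablac J=0$ as a by-product.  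This dovetails with your treatment of the converse: you identify the Chern connection with the para-Obata connection and thereby deduce torsion-freeness, whereas the paper's converse is terser, simply observing that the eigendistributions $\DD_I^\pm$, $\DD_K^+$ form a $3$-web which determines $(I,K,J)$ back, without spelling out the torsion-free step.  The one point you rightly flag as delicate --- that compatibility with $S$ forces the eigendistributions to be $\alpha$-planes, so that the Segre structure of the constructed web is $S$ itself --- is handled lightly in both approaches; invoking the description of the automorphism group of the Segre cone is the correct tool there.
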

\begin{proof}
The existence of $S$ follows from the fact that three points in the Grasmannian $G_m(\R^{2m})$ of $m$-planes in $\R^{2m}$ determine uniquely a Veronese embedding of the projective curve $\R P^1$ into $G_m(\R^{2m})$. The image of the embeding is a Segre cone. If a 3-web is given, then the construction can be applied to $G_m(T_xM)$ at any point $x\in M$, where one takes $T_x\FF_i$, $i=1,2,3$, as the three points in the Grasmannian. The Segre cone $S$ is a collection of all vectors of the form
$$
sX-tj(X)
$$
where $X\in T\FF_1$, $(s:t)\in\R P^1$ is arbitrary and $j$ is an endomorphism defined by \eqref{end_j_form}. We shall denote  $\VV_{(s:t)}=\spn \{sX-tj(X)\ |\ X\in T\FF_1\}$  and abbreviate $\VV_t=\VV_{(1:t)}$ and $\VV_\infty=\VV_{(0:1)}$. All $\VV_t$, $t\in \R$, are rank-$m$ distributions on $M$. Moreover, the tangent planes $T_x\VV_t$, $x\in M$, $t\in\R$, are exactly the $\alpha$-planes of $S$. Note that $\VV_0=T\FF_1$, $\VV_\infty=T\FF_2$ and $\VV_1=T\FF_3$ are integrable by definition. The integrability of $\VV_t$, where $t\in\R$ is arbitrary, is equivalent to the vanishing of the torsion of the Chern connection $\nablac$ (see \cite[Theorem 4.14]{N}). Therefore if the torsion of $\nablac$ vanishes then $S$ admits a $(m+1)$-parameter family of $\alpha$-submanifolds in $M$ defined by all leaves of all distributions $\VV_t$, $t\in\R$. It follows that $S$ is $\alpha$-integrable. In particular $\VV_{-1}=\spn\{sX+tj(X)\ |\ X\in\VV_0\}$ is integrable. Therefore, the two decompositions
$$
TM=\VV_0\oplus\VV_\infty,\qquad TM=\VV_1\oplus\VV_{-1}
$$
define two para-complex structures $I$ and $K$ such that $\DD_I^+=\VV_0$, $\DD_I^-=\VV_\infty$ and $\DD_K^+=\VV_1$, $\DD_K^-=\VV_{-1}$. We shall prove that $I$ and $K$ preserve $S$. Let $X\in\VV_0$. Then $j(X)\in\VV_\infty$. Therefore $I(X-tj(X))=X+tj(X)$. Hence $I(\VV_t)=\VV_{-t}$. Moreover we have $X-tj(X)=\frac{1+t}{2}(X-j(X))+\frac{1-t}{2}(X+j(X))$, where $X-j(X)\in\VV_1$ and $X+j(X)\in\VV_{-1}$. Therefore $K(X-tj(X))=tX-j(X)$. Hence $K(\VV_t)=\VV_{1/t}$. Similarly one proves that the structures anti-commute and $J=IK$ is a complex structure complementing $I$ and $K$ to a hyper-para-complex structure. Indeed we have $I(X)=X$, $I(j(X))=-j(X)$, $K(X)=-j(X)$ and $K(j(X))=-X$. Thus $IK(X)=j(X)=-KI(X)$ and $IK(j(X))=-X=-KI(j(X))$. This proves the first part of the theorem.

In order to prove the converse statement we note that if $S$ is a Segre structure and $(I,J,K)$ is a compatible hyper-para-complex structure then we can locally define a 3-web $\{\FF_1,\FF_2,\FF_3\}$ by $T\FF_1=\DD_I^+$, $T\FF_2=\DD_I^-$ and $T\FF_3=\DD_K^+$. Note that $\DD_K^-$ is defined as the span of all $I(X)$ with $X\in\DD_K^+$. Hence the hyper-para-complex structure is determined uniquely by $\{\FF_1,\FF_2,\FF_3\}$.
\end{proof}

The space of all $\alpha$-submanifolds of an $\alpha$-integrable Segre structure $S$ is called the twistor space of $S$ and will be denoted $\mathcal{T}(S)$. It follows from Theorem \ref{thm1} that in the presence of a hyper-para-complex structure the twistor space has a natural fibration over $\R P^1$ given by the projective parameter $(s:t)$ defined by the corresponding 3-web in the proof of Theorem \ref{thm1}. The fibration
$$
\mathcal{T}(S)\to\R P^1
$$ 
is such that for any $(s:t)\in\R P^1$ the $\alpha$-submanifolds in the fibre $\mathcal{T}(S)_{(s:t)}$ form a foliation of $M$. Thus, there is a one-parameter family of foliations $\FF=\{\FF_{(s:t)}\}_{(s:t)\in\R P^1}$. In terms of the distributions $\VV_t=\spn \{X-tj(X)\ |\ X\in T\FF_1\}$ the foliations are given by
\begin{equation}\label{web_form}
T\FF_t= \VV_t
\end{equation}
where, as before, we use an affine parameter $t=(1:t)$, for convenience. It will be useful to adapt the following definition later on.
\begin{definition}
An integrable Segre structure $S$ with a fixed fibration $\mathcal{T}(S)\to\R P^1$ of the twistor space as above is called a \emph{hyper-para-complex Segre structure}.
\end{definition}

It is an easy observation that the Chern connection for a torsion-free 3-web $\{\FF_1,\FF_2,\FF_3\}$ does not depend on the ordering of the foliations, because if the torsion vanishes then the normalisation condition (C2) is fulfilled with respect to any order of the foliations. Moreover, if $T(\nablac)=0$ then the Chern connection of $\{\FF_1,\FF_2,\FF_3\}$ coincides with the Chern connection of a 3-web $\{\tilde\FF_1,\tilde\FF_2,\tilde\FF_3\}$, where $T\tilde{\FF_i}=\VV_{t_i}$ for three distinct points $t_1,t_2,t_3\in\R$. This follows from the fact that all $\VV_t$ are parallel with respect to $\nablac$, i.e. (C1) is satisfied independently of the choice of the three points in $\R P^1$. Hence, the Chern connection is canonically assigned to a hyper-para-complex Segre structure via the fibration of the corresponding twistor space. If $m=2$ we get the following

\begin{corollary}\label{cor1}
Let $[g]$ be a conformal metric of neutral signature defined by a 3-web on a four-dimensional manifold. Then the corresponding Chern connection is a Weyl connection for $[g]$.
\end{corollary}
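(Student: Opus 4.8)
The plan is to show that every parallel transport of $\nablac$ is a conformal map for $[g]$; since this is equivalent to $\nablac g=\omega\otimes g$ for some $1$-form $\omega$, it is exactly the statement that $\nablac$ is a Weyl connection for $[g]$. In dimension $4$ the Segre cone $S(x)$ is precisely the null cone of a representative of $[g]$ at $x$ (recall that $(2,2)$-Segre structures are the same thing as neutral conformal structures), so it suffices to prove that the parallel transport of $\nablac$ preserves the cone field $S$.

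I would first recall, from the proof of Theorem \ref{thm1}, that $S(x)=\bigcup_{(s:t)\in\R P^1}\VV_{(s:t)}(x)$ is the union of its $\alpha$-planes, where $\VV_{(s:t)}=\spn\{sX-t\,j(X)\ |\ X\in T\FF_1\}$ and $j$ is the endomorphism \eqref{end_j_form}. Then I would check that each distribution $\VV_t$, for a \emph{fixed constant} $t\in\R P^1$, is parallel with respect to $\nablac$. This uses two facts about the Chern connection: from \eqref{chern_def} it is immediate that $\nablac$ preserves $T\FF_1$ and $T\FF_2$, and in addition $\nablac j=0$ (the Chern connection makes all three web distributions parallel; see \cite{N}). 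Granting these, any local section of $\VV_t$ has the form $Z-t\,j(Z)$ with $Z\in\Gamma(T\FF_1)$, and for any vector field $X$ one computes $\nablac_X(Z-t\,j(Z))=\nablac_X Z-t\,j(\nablac_X Z)\in\Gamma(\VV_t)$ since $\nablac_X Z\in\Gamma(T\FF_1)$ and $t$ is constant; likewise $\VV_\infty=T\FF_2$ is parallel. Hence parallel transport of $\nablac$ along any path maps each $\VV_{(s:t)}(x)$ to $\VV_{(s:t)}(x')$, and therefore maps $S(x)$ to $S(x')$.

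To conclude: a linear isomorphism $T_xM\to T_{x'}M$ carrying $S(x)$ onto $S(x')$ is conformal for $[g]$, because two nondegenerate quadratic forms on a $4$-dimensional space with the same null cone are proportional; thus every parallel transport of $\nablac$ is conformal, i.e. $\nablac g=\omega\otimes g$, so $\nablac$ is a Weyl connection for $[g]$. If the explicit Weyl $1$-form is wanted, one chooses a web-adapted frame $e_1,e_2$ of $T\FF_1$ and sets $f_i=j(e_i)$; then $g=e^1\odot f^2-e^2\odot f^1$ represents $[g]$, the parallelism of $j$ makes the connection $1$-forms of $(f_i)$ coincide with those of $(e_i)$, namely $\nablac e_i=\sum_j\theta^j_i\otimes e_j$ and $\nablac f_i=\sum_j\theta^j_i\otimes f_j$, and a short Leibniz computation gives $\nablac g=-\tr(\theta^j_i)\otimes g$, so $\omega=-\tr(\theta^j_i)$. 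The one genuinely non-formal ingredient is $\nablac j=0$ — equivalently, that the Chern connection also parallelises the third web distribution $T\FF_3=\VV_1$ — which comes from the structure theory of \cite{N} and is not visible from \eqref{chern_def} alone; note also that $\nablac$ need not be torsion-free (it is exactly when the web is $\alpha$-integrable), so here "Weyl connection" is understood as a connection preserving the conformal class.
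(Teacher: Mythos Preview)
Your argument is correct and is precisely an unpacking of the paper's one-line proof, which simply says ``the Chern connection preserves the null cone of $[g]$, i.e.~it preserves the conformal class itself''; you make this explicit by showing each $\VV_t$ is $\nablac$-parallel via $\nablac j=0$ and then invoking that a linear map preserving the null cone is conformal. One small remark: the fact $\nablac j=0$ that you flag as the ``genuinely non-formal ingredient'' actually follows immediately from property (C1) stated in the paper (parallel transport preserves $T\FF_1$, $T\FF_2$, $T\FF_3$, and $j$ is uniquely determined by these three distributions, so $j$ commutes with parallel transport); no separate appeal to \cite{N} is needed.
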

\begin{proof}
Indeed, the Chern connection preserves the null cone of $[g]$, i.e.~it preserves the conformal class itself.
\end{proof}

\begin{remark}
In the context of hyper-Hermitian structures the Chern connection is usually called the Obata connection, see \cite{Ca}.
\end{remark}

\paragraph{Kronecker webs.}
A Kronecker web on a manifold $M$ is a 1-parameter family of foliations $\FF=\{\FF_{(s:t)}\}_{(s:t)\in\R P^1}$ of fixed codimension $k$ such that in a neighbourhood of any $x\in M$ one can find $(s:t)$-dependent one-forms $\alpha^1_{(s:t)},\ldots,\alpha^k_{(s:t)}$ such that
$$
T\FF_{(s:t)}=\bigcap_{i=1,\ldots,k}\ker\alpha^i_{(s:t)}
$$
and all $\alpha^i_{(s:t)}$ are polynomial in $(s:t)$ and point-wise linearly independent for any fixed $(s:t)\in\R P^1$. The Kronecker webs were introduced in \cite{Pa, Z1} and appear as reductions of certain bi-Hamiltonian structures. If $k=1$ then the webs are called Veronese webs and this particular case was initially studied by Gelfand and Zakharevich \cite{GZ}. We refer to \cite{GZ, Pa, Z1} for more information on the webs.

The family $\FF$ defined by \eqref{web_form} is an example of a Kronecker web. Indeed, in this case $\dim M=2m$, $k=m$ and one sees that the tangent bundles $T\FF_{(s:t)}$ are annihilated by one-forms linear in $(s:t)$. The webs of this form are called isotypic Kronecker webs of order one in the terminology of \cite{K1}.

Two Kronecker webs $\{\FF_t\}$ and $\{\FF'_t\}$ on $M$ are said to be equivalent in \cite{K1} if there exists a diffeomorphism $\varphi$ such that $\varphi(\FF_t)=\FF'_t$ for any $t=(1:t)$. This means that the affine parameter $t$ as well as the projective one $(s:t)$ are chosen and canonically assigned to a web. In particular the two decompositions $T\FF_0\oplus T\FF_\infty$ and $T\FF_1\oplus T\FF_{-1}$ of $TM$ are canonically given and define a canonical hyper-para-complex structure assigned to the Kronecker web. Conversely, any hyper-para-complex structure defines a family $\{\FF_t\}$ as in the proof of Theorem \ref{thm1}. From this point of view, the isotypic Kronecker webs of order one and the hyper-para-complex structures are equivalent objects. One can also say that a Kronecker web is a hyper-para-complex Segre structure with a fixed parametrisation of $\R P^1$ in the fibration $\mathcal{T}(S)\to \R P^1$. The later are also equivalent to the torsion-free 3-webs because the choice of three points in $\R P^1$ fixes a parametrisation of $\R P^1$ as in the proof of Theorem \ref{thm1}.

\begin{definition}
A Kronecker web, or a hyper-para-complex structure is called \emph{Ricci-flat} if the Ricci curvature tensor of the corresponding torsion-free Chern connection vanishes. 
\end{definition}

In the next section we shall use the correspondence between the isotypic Kronecker webs and hyper-para-complex structures and interpret results of \cite{K1} in the context of the hyper-para-complex structures.

\section{Pleba\'nski's equations}\label{sec3}

\paragraph{Normal form of hyper-para-complex structures.}
We shall show now how one can generalise the second heavenly Pleba\'nski equation \cite{P} (see also \cite{FP,KM}) to higher dimensional structures. The first step is the following reformulation of our result \cite[Corollary 7.5]{K1}.
\begin{theorem}\label{thm2}
There is a one to one correspondence between germs of hyper-para-complex structures on $\R^{2m}$ and germs of functions $R^1\ldots,R^m\colon\R^{2m}\to\R$ given modulo transformations
$$
(R^1(x,y),\ldots,R^m(x,y))^T\mapsto G\cdot (R^1(G^{-1}x,G^{-1}y),\ldots,R^m(G^{-1}x,G^{-1}y))^T, \qquad x,y\in\R^m,
$$
where $G\in GL(m,\R)$ is a constant matrix, and satisfying
\begin{equation}\label{eq1}
\sum_{l=1,\ldots,m}\left(\partial_{x_i}R^l\partial_{x_l}\partial_{x_j}R^k-\partial_{x_j}R^l\partial_{x_l}\partial_{x_i}R^k\right)
= \partial_{y_j}\partial_{x_i}R^k-\partial_{y_i}\partial_{x_j}R^k
\end{equation}
and
\begin{equation}\label{eq2}
R^k(0,y)=\partial_{x_i}R^k(0,y)=\partial_{x_i}\partial_{x_j}R^k(0,y)=0
\end{equation}
for all $i,j,k=1,\ldots,m$, where $(x,y)=(x_1,\ldots,x_m,y_1,\ldots,y_m)$ are suitable coordinates on $\R^{2m}$. Moreover $R_{ijl}^k=\partial_{x_i}\partial_{x_j}\partial_{x_l}R^k$ are components of the Riemann curvature tensor of the Chern connection $\nablac$ and $Ric(\nablac)=0$ if and only if $\sum_{k=1}^mR_{ijk}^k=0$.
\end{theorem}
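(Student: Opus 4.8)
The plan is to reinterpret \cite[Corollary 7.5]{K1} through the equivalence, established in Section \ref{sec2}, between germs of hyper-para-complex structures and germs of isotypic Kronecker webs of order one carrying the canonical projective parameter, and then to read off the curvature from the coordinate formulae of Section \ref{sec5}. Given a hyper-para-complex structure $(I,K,J)$ on $\R^{2m}$, I first pass to the one-parameter family $\FF=\{\FF_t\}$ with $T\FF_0=\DD_I^+$, $T\FF_\infty=\DD_I^-$, $T\FF_1=\DD_K^+$ (so that $T\FF_t=\VV_t$), which is an isotypic Kronecker web of order one; conversely every such web produces a hyper-para-complex structure by Theorem \ref{thm1}, and the two operations are mutually inverse. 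As $\VV_0$ and $\VV_\infty$ are integrable and transverse, one may choose coordinates $(x,y)=(x_1,\dots,x_m,y_1,\dots,y_m)$ with $\VV_0=\spn\{\partial_{y_i}\}$ and $\VV_\infty=\spn\{\partial_{x_i}\}$; then the annihilator of $\VV_t$ is spanned by one-forms linear in $t$ (this being exactly the order-one condition), and using the integrability of $\VV_1$ together with the residual freedom in these coordinates one brings the web to the normal form of \cite[Corollary 7.5]{K1}, in which it is encoded by functions $R^1,\dots,R^m$ satisfying \eqref{eq2}. The integrability of all $\VV_t$, $t\in\R$ --- equivalently, by \cite[Theorem 4.14]{N}, the vanishing of the torsion of $\nablac$ --- then becomes precisely the quadratic system \eqref{eq1}, and the only remaining ambiguity in $(R^1,\dots,R^m)$ is the linear $GL(m,\R)$-action displayed in the statement, namely the stabiliser of the normalisation \eqref{eq2} inside the coordinate changes preserving the adapted form. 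Conversely every $(R^k)$ solving \eqref{eq1}--\eqref{eq2} yields, through the normal form, an integrable family $\{\VV_t\}$ and hence a hyper-para-complex structure, so one obtains the claimed bijection of germs.

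For the curvature assertion I would substitute the frame adapted to the normal form into the defining formula \eqref{chern_def} and compute $T(\nablac)$ and the Riemann tensor of $\nablac$ using the coordinate expressions tabulated in Section \ref{sec5}. Because $\VV_0$ and $\VV_\infty$ are flat coordinate foliations, the connection one-forms of $\nablac$ are supported only on the mixed blocks and, once the torsion-free condition \eqref{eq1} is imposed, reduce to first $x$-derivatives of the $R^k$; one more differentiation produces a Riemann tensor whose only independent components are the fully symmetric $R^k_{ijl}=\partial_{x_i}\partial_{x_j}\partial_{x_l}R^k$. The Ricci tensor is the corresponding trace, and by these symmetries every potentially nonzero entry of it is the single contraction $\sum_{k=1}^m R^k_{ijk}=\sum_{k=1}^m\partial_{x_i}\partial_{x_j}\partial_{x_k}R^k$, so $Ric(\nablac)=0$ if and only if $\sum_{k=1}^m R^k_{ijk}=0$ for all $i,j$.

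The main technical point is this last step: confirming that no further independent curvature components survive and that every nonzero Ricci component equals $\sum_k R^k_{ijk}$ relies on the special ``triangular'' shape of the Chern connection of a torsion-free web in web-adapted coordinates, and this is precisely where the explicit formulae of Section \ref{sec5} do the work. By contrast, the system \eqref{eq1}, the normalisation \eqref{eq2} and the $GL(m,\R)$ ambiguity follow at once once the dictionary between the web normal form and the functions $R^k$ is in place, these being the content of \cite[Corollary 7.5]{K1}.
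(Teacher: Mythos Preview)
Your approach is essentially the paper's: invoke the equivalence from Section~\ref{sec2} between hyper-para-complex structures and isotypic Kronecker webs of order one, and then quote \cite[Corollary~7.5]{K1} for the normal form, \eqref{eq1}, \eqref{eq2}, the $GL(m,\R)$ ambiguity, and the curvature identification. Two small remarks: first, in the paper's convention \eqref{eq_Lax} one has $\VV_0=\spn\{\partial_{x_i}\}$ rather than $\spn\{\partial_{y_i}\}$, so your labelling of $\VV_0$ and $\VV_\infty$ is swapped; second, the curvature assertion is already part of \cite[Corollary~7.5]{K1} and need not be re-derived from Section~\ref{sec5}, whose formulae are written in a different adapted chart (the $w_i$, $C=W_xW_y^{-1}$ coordinates) and only for $m=2$.
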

\begin{proof}
It is proved in \cite[Corollary 7.5]{K1} that there is a one to one correspondence between germs of the isotypic Kronecker webs and germs of functions $R=(R^1,\ldots,R^m)$ as above. In view of the discussion in the previous section this result can be directly reformulated in terms of the hyper-para-complex structures. 
\end{proof}

By Theorem \ref{thm1} the hyper-para-complex structures extend to $\alpha$-integrable Segre structures. The $\alpha$-submanifolds are tangent to 
\begin{equation}\label{eq_Lax}
\VV_t=\spn\left\{\partial_{x_i}+t\partial_{y_i}+t\sum_{j=1,\ldots,m}\partial_{x_i}R^j\partial_{x_j}\ |\ i=1,\ldots,m\right\}.
\end{equation}
where $t\in\R$. The vector fields $L_i(t)=\partial_{x_i}+t\partial_{y_i}+t\sum_{j=1,\ldots,m}\partial_{x_i}R^j\partial_{x_j}$ commute if and only if \eqref{eq1} holds. $(L_1,\ldots,L_m)$ will be referred to as the Lax tuple of the Segre structure.

\paragraph{Higher heavenly equations.}
We shall consider the Ricci-flat structures, i.e. we assume that $Ric(\nablac)=\sum_{k=1}^mR_{ijk}^k=0$. However, $R_{ijl}^k=\partial_{x_i}\partial_{x_j}\partial_{x_l}R^k$ and using the boundary condition \eqref{eq2} we can integrate the equation $\sum_{k=1}^mR_{ijk}^k=0$ and get
$$
\sum_{i=1,\ldots,m}\partial_{x_i}R^i=0,
$$
which is equivalently written as
$$
d_x\rho=0,
$$
where
$$
\rho=\sum_{i=1,\ldots,m}(-1)^iR^idx_1\wedge\ldots\wedge dx_{i-1}\wedge dx_{i+1}\wedge\ldots\wedge dx_m=0
$$
and $d_x$ denotes the exterior differential with respect to $x$-coordinates only. This implies that there exist functions $\theta_{ij}$, $i<j$, such that
$$
\rho=d_x\left(\sum_{i<j}\theta_{ij}\bigwedge_{\substack{s=1,\ldots,m\\ s\neq i,j}}dx_s\right).
$$
In particular if one denotes $\theta_{ij}=-\theta_{ji}$ for $i>j$, then 
\begin{equation}\label{eq_R}
R^i=\sum_{\substack{j=1,\ldots,m\\j\neq i}}(-1)^{i+j}\partial_{x_j}\theta_{ji}.
\end{equation}
Now we rewrite \eqref{eq1} in terms of $\theta_{ij}$. Changing the order of differentiations, we get
\begin{equation}\label{eq_a}
d_x\gamma_{ij}=d_x\beta_{ij}
\end{equation}
where
\begin{equation}\label{eq_gamma}
\gamma_{ij}=\sum_{l<k}(\partial_{x_i}R^l\partial_{x_j}R^k-\partial_{x_j}R^l\partial_{x_i}R^k)\bigwedge_{\substack{s=1,\ldots,m\\ s\neq l,k}}dx_s
\end{equation}
and
\begin{equation}\label{eq_beta}
\beta_{ij}=\sum_{l<k}(\partial_{y_j}\partial_{x_i}\theta_{kl}-\partial_{y_i}\partial_{x_j}\theta_{kl})\bigwedge_{\substack{s=1,\ldots,m\\ s\neq l,k}}dx_s.
\end{equation}
It follows from the Poincare lemma applied to \eqref{eq_a} that locally there exist $(m-3)$-forms $f_{ij}$ such that
\begin{equation}\label{eq_pleb}
\gamma_{ij}=\beta_{ij}+d_xf_{ij}.
\end{equation}
Additionally we can assume
\begin{equation}\label{eq_pleb_cond}
f_{ij}(0,y)=0,
\end{equation}
which follows from \eqref{eq2}. We have proved the following
\begin{theorem}\label{thm3}
Let
$$
f_{ij}=\sum_{p<q<r}f_{ij}^{pqr}\bigwedge_{\substack{s=1,\ldots,m\\ s\neq p,q,r}}dx_s
$$
be a collection of germs at 0 of $(m-3)$-forms on $\R^{2m}$ satisfying \eqref{eq_pleb_cond}. Then, any solution $\theta=(\theta_{ij})$ to \eqref{eq_pleb} give rise to a germ of a Ricci-flat hyper-para-complex structure via \eqref{eq_R} and \eqref{eq_Lax}. Conversely, any germ of a Ricci-flat hyper-para-complex structure can be obtained in this way.
\end{theorem}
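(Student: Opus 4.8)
Most of the argument is already present in the derivation that precedes the statement, which runs exactly the converse direction: from a Ricci-flat hyper-para-complex structure, Theorem \ref{thm2} produces $R^1,\ldots,R^m$ obeying \eqref{eq1}, \eqref{eq2} and $\sum_k R_{ijk}^k=0$; the last condition integrates, with the help of \eqref{eq2}, to $\sum_i\partial_{x_i}R^i=0$, i.e.~$d_x\rho=0$; a first application of the Poincaré lemma yields the potentials $\theta_{ij}$ and hence \eqref{eq_R}; rewriting \eqref{eq1} through \eqref{eq_a} and applying the Poincaré lemma once more yields the $(m-3)$-forms $f_{ij}$ and \eqref{eq_pleb}; and \eqref{eq2} forces \eqref{eq_pleb_cond}. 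Since \eqref{eq_R} and \eqref{eq_Lax} then recover the original $R^i$ and hence, by Theorem \ref{thm1}, the original structure, this is a genuine ``obtained in this way''. So the remaining work is to supply the forward implication and to check that each step above is reversible.

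For the forward implication, let $f_{ij}$ satisfy \eqref{eq_pleb_cond} and let $\theta=(\theta_{ij})$ solve \eqref{eq_pleb}, and define $R^1,\ldots,R^m$ by \eqref{eq_R}. Two observations. First, $\sum_i\partial_{x_i}R^i=0$ identically, because in \eqref{eq_R} the convention $\theta_{ij}=-\theta_{ji}$ makes the $(i,j)$ and $(j,i)$ terms of $\sum_i\partial_{x_i}R^i$ cancel pairwise. Second, applying $d_x$ to \eqref{eq_pleb} and using $d_x^2=0$ gives $d_x\gamma_{ij}=d_x\beta_{ij}$, which is \eqref{eq_a}. One then recovers \eqref{eq1} from \eqref{eq_a}: the $(m-1)$-form identity \eqref{eq_a} attached to a pair $i<j$ splits into $m$ scalar identities indexed by the omitted coordinate $k$, and the $k$-th one coincides, modulo a multiple of $\sum_l\partial_{x_l}R^l$, with the $(i,j,k)$-instance of \eqref{eq1}; since $\sum_l\partial_{x_l}R^l=0$, the system \eqref{eq1} holds.

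Granting \eqref{eq1}, the vector fields $L_i(t)=\partial_{x_i}+t\partial_{y_i}+t\sum_j\partial_{x_i}R^j\,\partial_{x_j}$ of \eqref{eq_Lax} pairwise commute, so every distribution $\VV_t=\spn\{L_1(t),\ldots,L_m(t)\}$ is integrable. Setting $T\FF_1=\VV_0$, $T\FF_2=\VV_\infty$, $T\FF_3=\VV_1$ defines a 3-web whose Chern connection has vanishing torsion, since the torsion obstructs precisely the integrability of all the $\VV_t$, and Theorem \ref{thm1} then supplies the $\alpha$-integrable Segre structure with $\alpha$-planes as in \eqref{eq_Lax} together with a compatible hyper-para-complex structure. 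Finally, Ricci-flatness follows from $\sum_i\partial_{x_i}R^i=0$: by the curvature formulae of Section \ref{sec5}, the Ricci tensor of $\nablac$ in these web-adapted coordinates is controlled by $\sum_k\partial_{x_k}R^k$ and its derivatives, so it vanishes; alternatively one first brings the structure to the normal form of Theorem \ref{thm2}, in which $Ric(\nablac)=0$ amounts to $\sum_k R_{ijk}^k=0$. The remaining normalisation matters — the boundary behaviour of $\theta$ implicit in \eqref{eq_R} and the condition \eqref{eq_pleb_cond} in the converse — are handled by taking the standard homotopy operators in the two Poincaré-lemma steps, which preserve vanishing along $\{x=0\}$ inherited from \eqref{eq2}.

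The step I expect to be the main obstacle is establishing the equivalence, not merely one implication, between \eqref{eq_a} and \eqref{eq1}. Passing from the scalar system \eqref{eq1} to the differential-form identity \eqref{eq_a} by ``changing the order of differentiations'' and then inverting it is exactly where $\sum_l\partial_{x_l}R^l=0$ is used, and one has to track signs and index ranges in the exterior derivatives of \eqref{eq_gamma} and \eqref{eq_beta} with some care to be sure that applying $d_x$ drops none of the $m$ components of \eqref{eq1} for a given pair $i<j$. A secondary, more clerical point is to bookkeep the redundancies so that Theorem \ref{thm3} reads as a clean bijection: $\theta$ is determined only up to $d_x$-closed $(m-2)$-forms and $f_{ij}$ only up to $d_x$-closed $(m-3)$-forms, and these freedoms should be checked to interact correctly with the constant $GL(m,\R)$ action of Theorem \ref{thm2}.
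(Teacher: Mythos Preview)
Your proposal is correct and follows the paper's approach: the paper's ``proof'' is literally the derivation preceding the theorem (it writes ``We have proved the following'' and states Theorem~\ref{thm3}), which handles only the converse direction and leaves the forward implication as implicit reversibility of each step. You have filled in exactly those reversibility checks --- that \eqref{eq_R} forces $\sum_i\partial_{x_i}R^i=0$, that $d_x$ applied to \eqref{eq_pleb} recovers \eqref{eq_a}, and that \eqref{eq_a} together with $\sum_l\partial_{x_l}R^l=0$ recovers \eqref{eq1} --- and correctly flagged the equivalence of \eqref{eq_a} and \eqref{eq1} as the one place requiring care; there is nothing in the paper's argument beyond this.
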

Equation \eqref{eq_pleb} is a generalisation of the second heavenly Pleba\'nski's equation \cite{P}. Indeed, if $m=2$ then there is only one function $\theta=\theta_{12}$, there are no functions $f_{ij}$ and \eqref{eq_pleb} takes the form
$$
(\partial_{x_1}^2\theta)(\partial_{x_2}^2\theta)-(\partial_{x_1}\partial_{x_2}\theta)^2= \partial_{y_1}\partial_{x_2}\theta-\partial_{y_2}\partial_{x_1}\theta.
$$
In the general case, any $(m-3)$-form $f_{ij}$ has $\binom{m}{3}$ components and $(m-2)$-forms $\beta_{ij}$ and $\gamma_{ij}$ have $\binom{m}{2}$ components each. Therefore we have $\binom{m}{3}\binom{m}{2}$ arbitrary functions in the system \eqref{eq_pleb}, which consists of $\binom{m}{2}\binom{m}{2}$ equations for $\binom{m}{2}$ functions $\theta_{ij}$. Explicitly
$$
\begin{aligned}
\sum_{p,q=1}^m(-1)^{p+q}\left((\partial_{x_i}\partial_{x_p}\theta_{pl})(\partial_{x_j}\partial_{x_q}\theta_{qk}) -(\partial_{x_j}\partial_{x_p}\theta_{pl})(\partial_{x_i}\partial_{x_q}\theta_{qk})\right)=&\\ =\partial_{y_j}\partial_{x_i}\theta_{kl}-\partial_{y_i}\partial_{x_j}\theta_{kl}
+(df_{ij})_{kl},
\end{aligned}
$$
where $i<j$, $k<l$ and $(df_{ij})_{kl}$ is a coefficient of $df_{ij}$ next to $\bigwedge_{s=1,\ldots,m,\ s\neq l,k}dx_s$.
The system is overdetermined for $m>2$. In particular, in the case $m=3$ we have 9 equations (depending on 3 arbitrary functions) for 3 unknown functions.

\section{Systems of ordinary differential equations}\label{sec4}

\paragraph{A class of transformations.}
We shall consider systems of second order ODEs given in the form
$$
x''=F(t,x,x'),\eqno{(F)}
$$
where $x=(x_1,\ldots,x_m)\in\R^m$ and $t\in\R$. The geometry of such systems is related to the torsion-free path geometries \cite{G}. In particular, if $m=2$ then the main result of \cite{G} can be rephrased that there is a correspondence between anti-self-dual metrics of split signature on $\R^4$ and pairs of second order ODEs given up to point transformations for which the so-called Wilczynski invariant vanishes (see \cite{CDT}). However, the problem of characterisation of the classes of point equivalent ODEs that correspond to hyper-Hermitian or hyper-K\"ahler structures remains open. The similar problem for the hyper-CR Einstein-Weyl structures in dimension 3 has been solved in \cite{DK2}, but the solution is complicated and leads to invariants of very high order. In the present case the problem seems to be even more difficult. However, it has a much simpler solution if one uses \cite{K1} and considers ODEs modulo so-called Veronese transformations. This is a more rigid class of transformations in comparison to point transformations. In order to provide the definition, we recall that a system $(F)$ can be encoded by the total derivative vector field $X_F$ defined on the space of jets. In the present case we consider $J^1(\R,\R^m)$ the space of 1-jets of mappings $\R\to\R^m$ with natural coordinates $(t,x_1,\ldots,x_m,y_1,\ldots,y_m)$, where $y_i$ corresponds to $\frac{d x_i}{dt}$. Then
$$
X_F=\partial_t+\sum_{i=1,\ldots,m}\left(y_i\partial_{x_i}+F_i\partial_{y_i}\right).
$$

We say that $\Phi\colon J^1(\R,\R^m)\to J^1(\R,\R^m)$ is a Veronese transformation if it is a point transformation such that the independent variable is transformed by the formula
\begin{equation}\label{trans_form}
t\mapsto \frac{at+b}{ct+d}
\end{equation}
where $a,b,c,d\in\R$ are constant and $ad-bc\neq 0$.

\paragraph{ODEs and hyper-pseudo-complex structures.}
There are two basic invariants of $(F)$ modulo Veronese transformations. The first one is given by the matrix
$$
T=\left(-\partial_{x_i}F_j+ \frac{1}{2}X_F(\partial_{y_i}F_j)- \frac{1}{4}\sum_{k=1,\ldots,m}\partial_{y_i}F_k\partial_{y_k}F_j\right)_{i,j=1,\ldots,m}
$$
which can be interpret as a vector bundle endomorphism of $\VV=\spn\{\partial_{y_1},\ldots,\partial_{y_m}\}$. It is sometimes called the Jacobi endomorphism. In the context of ODEs it has already appeared in the paper of Chern \cite[problem (B)]{C}, who considered even more rigid transformations, such that $t$ remains constant. However, it is a matter of simple calculations that $T$ is preserved also by \eqref{trans_form} (indeed if $t\mapsto \varphi(t)$ then $T$ is modified by $\mathcal{S}(\varphi)\Id$, where $\mathcal{S}(\varphi)$ is the Schwartzian derivative of $\varphi$). The invariant has been generalised to a class of control systems in \cite{JK}, where it is called the curvature operator. Moreover, the Wilczynski invariant, which is a point invariant, is the trace-free part of $T$ (see \cite{G,CDT}). The second invariant of $(F)$ is given by
$$
B=\left(-\frac{1}{2}\partial_{y_i}\partial_{y_j}\partial_{y_k}F^l\right)_{i,j,k,l=1,\ldots,m}
$$
and can be thought of as a $(3,1)$-tensor on $\VV$ (see \cite[Definition 4.10]{JK}) and is called the Berwald curvature in the context of the Finsler geometry. We shall denote
$$
\tr\,B=\left(-\frac{1}{2}\sum_{k=1,\ldots,m}\partial_{y_i}\partial_{y_j}\partial_{y_k}F^k\right)_{i,j=1,\ldots,m}.
$$

\begin{theorem}\label{thm4}
There is a one to one correspondence between systems of $m$ second order ODEs given up to Veronese transformations such that
$T=0$ and hyper-para-complex $(m,2)$-Segre structures on $\R^{2m}$. The Ricci-flat structures correspond to equations with $\tr\,B=0$.
\end{theorem}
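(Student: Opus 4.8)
The plan is to set up the correspondence by matching the Lax tuple of the Segre structure (equation \eqref{eq_Lax}) with the integral curves of the total derivative vector field $X_F$, using the Veronese-invariant data $T$ and $\tr\,B$ as the bridge. First I would start from a system $(F)$ with $T=0$ and consider the distribution on $J^1(\R,\R^m)$ spanned by $X_F$ together with $\spn\{\partial_{y_1},\dots,\partial_{y_m}\}$; the key point is that, because $X_F$ is a total derivative, its flow preserves the contact structure and sweeps out an $(m+1)$-parameter family of $m$-dimensional submanifolds of $J^1$, which after reduction by the flow of $X_F$ (or by passing to an appropriate quotient/section) produces a $2m$-dimensional manifold carrying a one-parameter family of foliations $\{\FF_t\}$. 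The affine parameter here is exactly the independent variable $t$, and the freedom to reparametrise $t$ by \eqref{trans_form} corresponds precisely to the choice of three points in $\R P^1$, so that the equivalence of $(F)$ modulo Veronese transformations matches the equivalence of isotypic Kronecker webs of order one in the sense of \cite{K1}. By the discussion in Section \ref{sec2} and Theorem \ref{thm1}, this family of foliations is an isotypic Kronecker web of order one if and only if it is a hyper-para-complex $(m,2)$-Segre structure.

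Next I would identify the obstruction to the family $\{\FF_t\}$ being of the required isotypic form. The distributions $\VV_t = \spn\{L_i(t)\}$ with $L_i(t)=\partial_{x_i}+t\partial_{y_i}+t\sum_j \partial_{x_i}R^j\,\partial_{x_j}$ are \emph{linear} in $t$ — that is exactly what ``order one'' means — whereas the naive family coming from $X_F$ involves $F$, whose dependence on the fibre coordinates $y$ is a priori arbitrary. The vanishing of the Berwald curvature $B$, i.e.\ $\partial_{y_i}\partial_{y_j}\partial_{y_k}F^l=0$, says precisely that $F$ is at most quadratic in the $y$-variables, and the vanishing of the Jacobi endomorphism $T$ pins down the structure further: after a Veronese normalisation one can arrange $F$ to be such that the reduced family $\{\FF_t\}$ has tangent spaces annihilated by one-forms linear in $(s:t)$. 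So the logical steps are: (i) show $T=0 \Rightarrow$ the reduced family is a hyper-para-complex Segre structure; (ii) conversely, given such a structure with its Lax tuple \eqref{eq_Lax}, reconstruct $F$ and verify $T=0$; (iii) show the two constructions are mutually inverse modulo Veronese transformations on one side and diffeomorphisms preserving the structure on the other.

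For the Ricci-flat refinement I would invoke Theorem \ref{thm2}: the Chern connection of the hyper-para-complex structure has Riemann tensor with components $R^k_{ijl}=\partial_{x_i}\partial_{x_j}\partial_{x_l}R^k$, and $Ric(\nablac)=0$ iff $\sum_k R^k_{ijk}=0$, i.e.\ $\sum_k \partial_{x_i}\partial_{x_j}\partial_{x_k}R^k=0$. The task is then to translate this trace condition on the normal-form functions $R^k$ into the condition $\tr\,B=0$ on $(F)$. Under the correspondence the fibre variables $y_i$ of $J^1(\R,\R^m)$ and the $x_i$ of the normal-form coordinates get interchanged in the appropriate way (the $\partial_{y_i}$ directions in the ODE picture become the $\partial_{x_i}$ directions of the web, since $\VV_\infty=\spn\{\partial_{y_i}\}$), so $\partial_{y_i}\partial_{y_j}\partial_{y_k}F^k$ matches $\partial_{x_i}\partial_{x_j}\partial_{x_k}R^k$ up to normalisation and the trace conditions coincide. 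I would make this precise by writing the explicit change of coordinates between $J^1$ (quotiented by $X_F$) and $\R^{2m}$ with coordinates $(x,y)$, and checking that $F$ and $R$ are related so that $\tr\,B = c\sum_k R^k_{ijk}$ for a nonzero constant $c$.

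The main obstacle I expect is step (i)–(ii): establishing cleanly that $T=0$ is \emph{exactly} the condition for the family of foliations obtained from $X_F$ to be an isotypic Kronecker web of order one, rather than merely a Kronecker web of higher order or a family that fails integrability. This requires controlling how the Veronese reparametrisation \eqref{trans_form} acts (recall $T$ shifts by $\mathcal{S}(\varphi)\Id$ under $t\mapsto\varphi(t)$, so $T=0$ is the vanishing of a genuine Veronese invariant, not just its trace-free part), and then showing that with $T=0$ one can choose coordinates in which the reduced distributions take the linear form \eqref{eq_Lax}. The curvature/reduction bookkeeping — verifying that the quotient by the flow of $X_F$ is well-defined and that the images of $\VV = \spn\{\partial_{y_i}\}$ and the $X_F$-translates fit together into the distributions $\VV_t$ — is the technical heart; everything downstream (Ricci-flatness via $\tr\,B$) is then a direct calculation using Theorem \ref{thm2}.
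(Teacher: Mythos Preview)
Your outline is correct in its broad strokes, but the route differs from the paper's in a way worth noting. The paper does not attempt to write down the explicit coordinate change between $(F)$ and the normal form of Theorem~\ref{thm2} at all. Instead, for the first part it simply invokes \cite[Theorem 5.3]{K1} (the correspondence between ODEs up to $t$-preserving transformations and isotypic Kronecker webs) and observes that passing from Kronecker webs to Segre structures amounts to forgetting the parametrisation of $\R P^1$, which on the ODE side is exactly the extra freedom afforded by the M\"obius reparametrisations \eqref{trans_form}. For the Ricci-flat part, rather than matching $\partial_{y_i}\partial_{y_j}\partial_{y_k}F^k$ with $\partial_{x_i}\partial_{x_j}\partial_{x_k}R^k$ through an explicit change of variables, the paper compares the \emph{structure equations} of two canonical connections: the one for ODEs from \cite[Theorem 4.8]{JK} and the one for webs from \cite[Theorem 7.1]{K1}. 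Since both satisfy (C1)--(C2), the web connection \emph{is} the Chern connection, and when $T=0$ the ODE connection restricted to a slice $\{t=\const\}$ descends to the Chern connection on $J^1(\R,\R^m)/X_F$; under that identification $\tr\,B$ literally equals $Ric(\nablac)$, with no further computation. Your approach is more self-contained and elementary (it does not require the reader to unpack the structure equations of \cite{JK,K1}), but the explicit coordinate bookkeeping you propose --- especially the precise identification of $F$ with $R$ --- is likely to be messier than you anticipate, whereas the paper's connection-matching argument sidesteps it entirely. Also, your aside about $B=0$ forcing $F$ quadratic in $y$ is a red herring here: only $T=0$ is needed for the first correspondence, and only $\tr\,B=0$ (not full $B=0$) for Ricci-flatness.
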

\begin{proof}
The first part is a reformulation of \cite[Theorem 5.3]{K1} where we have proved a correspondence between ODEs and the Kronecker webs of the form $\FF=\{\FF_t\}$. However, in \cite{K1} we have considered the transformations preserving the independent variable $t$ as in \cite[Problem (B)]{C}. Here, the Veronese transformations $t\mapsto \frac{at+b}{ct+d}$ are needed, because we work with the Segre structures instead of the Kronecker webs (c.f. the discussion in Section \ref{sec2}). 

In order to prove the second part we shall show that the condition $\tr\,B=0$ is equivalent to the fact that the Chern connection for a Segre structure is Ricci flat. This follows from the structure equations for canonical frames associated to canonical connections for ODEs \cite[Theorem 4.8]{JK} and for webs \cite[Theorem 7.1]{K1}. Indeed, the connection defined in \cite{K1} coincides with the Chern connection (because it satisfies (C1) and (C2), i.e. it is torsion-free and preserves foliations $\FF_t$). On the other hand, it follows from the structure equations in \cite{JK} that if $T=0$ then on each slice $\{t=\const\}$ the connection for system $(F)$ can be identified with the Chern connection on $J^1(\R,\R^m)/X_F$. Under this identification $\tr\,B$ equals to $Ric(\nablac)$ (in particular $\mathcal{L}_{X_F}(B)=0$ provided that $T=0$, where $\mathcal{L}_{X_F}$ stands for the Lie derivative).
\end{proof}

\begin{remark}
The condition $\tr\,B=0$ is equivalent to the fact that $\sum_{k=1}^m \partial_{y_k}F^k$ is linear in $y_i$'s. One can verify that this condition is satisfied for all examples of \cite{CDT} corresponding to the hyper-K\"ahler metrics.
\end{remark}

\paragraph{Point transformations.}
There is an outstanding open question how to characterise the classes of point-equivalent systems of ODEs with the vanishing Wilczynski invariant that correspond to the hyper-Hermitian structures. In view of Theorem \ref{thm4} the question reduces to the problem of determining if there is a system with $T=0$ in a class of point equivalent ODEs.

Proceeding as in \cite{DK2} and computing the transformation rule for $T$ one gets the following

\begin{proposition}
A system $(F)$ with the vanishing Wilczynski invariant is point equivalent to a system with $T=0$ if and only if the equation
\begin{equation}\label{eq_wilcz}
g^2\tr\,T=-gX_F^2(g)+\frac{3}{2}X_F(g)^2
\end{equation}
has a solution $g\colon J^1(\R,\R^2)\to\R$ of the form $g=\partial_t\tilde t(t,x_1,x_2)+y_1\partial_{x_1}\tilde t (t,x_1,x_2)+y_2\partial_{x_2}\tilde t(t,x_1,x_2)$, where $\tilde t=\tilde t(t,x_1,x_2)$ is a function such that $\partial_t\tilde t\neq 0$.
\end{proposition}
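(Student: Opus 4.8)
The plan is to derive the transformation rule for the Jacobi endomorphism $T$ under point transformations on $\R\times\R^{2}$ and to read off the claimed criterion, exactly as is done for the three-dimensional Einstein--Weyl structures in \cite{DK2}. The starting point is that the trace-free part of $T$ is the Wilczynski invariant, which is a genuine point invariant (see \cite{G,CDT}). Hence, if the Wilczynski invariant of $(F)$ vanishes, so does that of every point-equivalent system $(\tilde F)$, and such a system satisfies $\tilde T=0$ if and only if $\tr\tilde T=0$. So the whole question reduces to deciding when a point transformation can annihilate the scalar $\tr T$.

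First I would fix notation for a point transformation: it is a local diffeomorphism of the base, $\Phi_0\colon(t,x_1,x_2)\mapsto(\tilde t,\tilde x_1,\tilde x_2)$ with $\tilde t=\tilde t(t,x_1,x_2)$ and $\tilde x_i=\tilde x_i(t,x_1,x_2)$, whose prolongation $\Phi\colon J^{1}(\R,\R^{2})\to J^{1}(\R,\R^{2})$ sends the total-derivative field $X_F$ to a multiple of $X_{\tilde F}$, because the line field $\spn\{X_F\}$ is intrinsic. Since the $\partial_{\tilde t}$-component of $\Phi_{*}X_F$ equals $X_F(\tilde t)=\partial_t\tilde t+y_1\partial_{x_1}\tilde t+y_2\partial_{x_2}\tilde t=:g$, one has $\Phi_{*}X_F=g\,X_{\tilde F}$, and $g\neq0$ near the zero section precisely because $\partial_t\tilde t\neq0$. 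Substituting $X_{\tilde F}=g^{-1}\Phi_{*}X_F$ into the definition of $T$ and bookkeeping the change of the horizontal and vertical frames, one obtains the transformation rule; I expect it to take the form
$$
g^{2}\,\tr\tilde T \;=\; g^{2}\,\tr T \;+\; g\,X_F^{2}(g)\;-\;\tfrac{3}{2}\,X_F(g)^{2},
$$
where $\tilde T$ is identified with its pull-back to $J^{1}(\R,\R^{2})$ and the numerical constants are those of \eqref{eq_wilcz}. In particular $\tr\tilde T=0$ holds if and only if $g$ solves \eqref{eq_wilcz}.

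It remains to match the available freedom. A function $g$ on $J^{1}(\R,\R^{2})$ is of the form displayed in the statement exactly when $g=X_F(\tilde t)$ for a base function $\tilde t$ with $\partial_t\tilde t\neq0$; given such a $\tilde t$, one completes it to a local diffeomorphism of the base by any admissible choice of $\tilde x_1,\tilde x_2$ (for instance $\tilde x_i=x_i$), and for the resulting system $(\tilde F)$ the trace-free part of $\tilde T$ is the vanishing Wilczynski invariant while $\tr\tilde T$ vanishes by \eqref{eq_wilcz}, whence $\tilde T=0$. Conversely, if $(F)$ is point equivalent to a system with $\tilde T=0$, the $\tilde t$-part of the transformation yields $g=X_F(\tilde t)$ of the required form, and the transformation rule together with $\tr\tilde T=0$ forces \eqref{eq_wilcz}. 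This proves the equivalence.

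The main obstacle is the transformation-rule computation, and in particular the point that $\tr\tilde T$ depends on the point transformation only through $\tilde t$ (equivalently, through $g$), with the new dependent variables $\tilde x_1,\tilde x_2$ dropping out. I would organise this by factoring $\Phi_0$ as a pure reparametrisation of the independent variable, $t\mapsto\tilde t(t,x_1,x_2)$ with the $x_i$ frozen, followed by a fibre-preserving transformation of the dependent variables ($\tilde t=t$). The first factor rescales $X_F$ by $g^{-1}$ and, treated fibrewise as in the scalar computation of \cite{DK2}, contributes precisely the Schwarzian-type term $g\,X_F^{2}(g)-\tfrac{3}{2}X_F(g)^{2}$; the second factor conjugates $T$ by the $\VV$-valued Jacobian $(\partial_{x_j}\tilde x_i)$ up to a lower-order correction, and the essential step is to check that this correction contributes nothing to $\tr T$ --- in harmony with the point invariance of the Wilczynski invariant. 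Composing the two factors yields the stated rule, and with it the proposition.
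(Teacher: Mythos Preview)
Your approach is exactly what the paper indicates: it states the proposition after the sentence ``Proceeding as in \cite{DK2} and computing the transformation rule for $T$ one gets the following'' and gives no further argument, so your outline---reduce to $\tr T$ via the point-invariance of the Wilczynski piece, identify $g=X_F(\tilde t)$ from $\Phi_{*}X_F=g\,X_{\tilde F}$, and derive the Schwarzian-type transformation law for $\tr T$---is precisely the intended computation. Your factoring into a pure reparametrisation of $t$ followed by a fibre-preserving change of the dependent variables, with the observation that the latter conjugates $T$ and hence leaves $\tr T$ untouched, is a clean way to organise the calculation and goes beyond what the paper actually writes down.
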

Unfortunately we are unable to find integrability conditions for \eqref{eq_wilcz}. In the hyper-CR case \cite{DK2} the situation was easier since the equation similar to \eqref{eq_wilcz} was polynomial in a variable corresponding to the second derivative of the dependent variable and differentiating the equation sufficiently many times with respect to this variable one could get a first order PDE which was easy to solve. One does not have a similar phenomenon in the present case. 

\paragraph{Bi-Hamiltonian structures.}
As mentioned earlier the Kronecker webs arise as reductions of bi-Hamiltonian structures, understood as pairs of compatible Poisson structures. We shall present a simple procedure of reconstructing the bi-Hamiltonian structures using the Lax tuples.

Let $(L_1,\ldots,L_m)$ be the Lax tuple for a hyper-para-complex $(m,2)$-Segre structure given by formulae \eqref{eq_Lax}. We introduce $m$ new variables $p_1,\ldots,p_m$ and define $t$-depended Poisson structure on $M\times\R^m$ by the formula
$$
P(t)=L_1\wedge\partial_{p_1}+L_1\wedge\partial_{p_1}+\ldots+L_m\wedge\partial_{p_m}.
$$
(Actually, in the language of the Segre structures we have $TM=V_m\otimes V_2$, where $V_m$ and $V_2$ are vector bundles of rank $m$ and $2$, respectively, and then $(p_1,\ldots,p_m)$ are linear coordinates on $V_m^*\to M$.) It is a matter of calculations to verify that the formula for $P(t)$ defines a 1-parameter family of compatible Poisson structures. The structures are degenerate and decompose into $m$ Kronecker blocks of dimension 3 each. Any such structure can be locally obtained in this way.

\section{Appendix: adapted coordinates}\label{sec5}

\paragraph{Torsion and curvature.}
We shall present here yet another description of the 3-webs. Namely, we shall describe 3-webs and the related objects in a convenient coordinate system. We restrict ourselves to the most interesting case of dimension 4, however all formulae can be easily generalise to higher dimensions just by extending the range of indices.

Let $\{\FF_1,\FF_2,\FF_3\}$ be a 3-web on $M$ of dimension $4$. In a neighbourhood of any point in $M$ we can find functions $x_i$, $y_i$ and $w_i$, $i=1,2$, such that the leaves of $\FF_1$, $\FF_2$ and $\FF_3$ are locally defined by  $\{x_i=\const\}$, $\{y_i=\const\}$ and $\{w_i=\const\}$, respectively.

The transversality condition implies that $(x_1,x_2,y_1,y_2)$ is a system of local coordinates on $M$ and the matrices
$$
W_x=\left(\frac{\partial w_i}{\partial x_j}\right)_{i,j=1,2}, \qquad W_y=\left(\frac{\partial w_i}{\partial y_j}\right)_{i,j=1,2}
$$
have rank 2. We recall that $d_x$ and $d_y$ denote the exterior differential with respect to $x=(x_1,x_2)$ and $y=(y_1,y_2)$ coordinates, i.e. $d=d_x+d_y$. Then, it is easy to verify that the mapping
$$
(s:t)\mapsto \VV^\perp_{(s:t)}=\spn\{s\omega^1_i+t\omega^2_i\ |\ i=1,2\},
$$
where
$$
\omega^1_i=d_xw_i,\qquad\omega^2_i=d_yw_i,
$$
defines the unique Veronese embedding $\R P^1\to Gr_2(T^*M)$ such that $\VV^\perp_{(1:0)}$, $\VV^\perp_{(0:1)}$ and $\VV^\perp_{(1:1)}$ annihilate $T\FF_1$, $T\FF_2$ and $T\FF_3$, respectively. Therefore, the 3-web defines the following conformal metric
\begin{equation}\label{metric_form}
g=\omega_1^1\cdot\omega_2^2-\omega_2^1\cdot\omega_1^2
\end{equation}
which is necessarily hyper-Hermitian if the torsion of the Chern connection vanishes. By Theorem \ref{thm1} any hyper-Hermitian metric of the neutral signature can be locally written in this form. In order to write down the Chern connection and its torsion tensor in coordinates we denote
$$
C=W_xW_y^{-1}.
$$
Then
\begin{equation}\label{chern_form}
\begin{aligned}
\nablac_{\partial_{x_i}}\partial_{x_j}=\sum_{s,t=1,2}\partial_{x_i}(C^j_s){C^{-1}}^s_t\partial_{x_t},&\qquad
\nablac_{\partial_{y_i}}\partial_{y_j}=-\sum_{s,t=1,2}{C^{-1}}^j_s\partial_{y_i}(C^s_t)\partial_{y_t},\\
\nablac_{\partial_{x_i}}\partial_{y_j}=0,&\qquad
\nablac_{\partial_{y_i}}\partial_{x_j}=0,\\
\end{aligned}
\end{equation}
and one verifies that
$$
\nablac g=\omega\otimes g
$$
where
\begin{equation}\label{weyl_form}
\omega=\frac{d_x\det W_y}{\det W_y} + \frac{d_y\det W_x}{\det W_x}.
\end{equation}
This confirms Corollary \ref{cor1} and $([g],\nablac)$ is a Weyl structure indeed. The following two propositions describe the hyper-Hermitian case and are proved by simple calculations. Note that equation \eqref{tor_form1} below is a generalisation of the Hirota equation from \cite{DK1, Z}. 

\begin{proposition}\label{prop1}
The torsion of $\nablac$ vanishes if and only if
\begin{equation}\label{tor_form1}
\partial_{x_1}w_i\partial_{x_2}\det W_y-\partial_{x_2}w_i\partial_{x_1}\det W_y = \partial_{y_1}w_i\partial_{y_2}\det W_x- \partial_{y_2}w_i\partial_{y_1}\det W_x,
\end{equation}
for $i=1,2$.
\end{proposition}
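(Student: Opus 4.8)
The plan is to bypass the explicit torsion computation from \eqref{chern_form} and instead use the characterisation appearing in the proof of Theorem~\ref{thm1} (following \cite{N}): the torsion of $\nablac$ vanishes precisely when every $\alpha$-plane distribution $\VV_t$, $t\in\R$, is integrable. In the adapted coordinates one has $\VV_t=\bigcap_{i=1,2}\ker\alpha_i(t)$ with $\alpha_i(t):=\omega^1_i+t\,\omega^2_i=d_xw_i+t\,d_yw_i$, so by the Frobenius theorem this amounts to $d\alpha_i(t)\wedge\alpha_1(t)\wedge\alpha_2(t)=0$ for $i=1,2$ and all $t\in\R$.

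First I would compute $d\alpha_i(t)$. Since $d=d_x+d_y$, $d_x^2=d_y^2=0$ and $d_xd_y=-d_yd_x$, one gets $d\alpha_i(t)=d_yd_xw_i+t\,d_xd_yw_i=(1-t)\,\Omega_i$, where $\Omega_i:=d_yd_xw_i$ is a two-form built only from the mixed pieces $dy_k\wedge dx_l$. Next, $\alpha_1(t)\wedge\alpha_2(t)=\det W_x\,dx_1\wedge dx_2+t\,\mu+t^2\det W_y\,dy_1\wedge dy_2$, where $\mu:=d_xw_1\wedge d_yw_2+d_yw_1\wedge d_xw_2$. Because $\Omega_i\wedge dx_1\wedge dx_2=\Omega_i\wedge dy_1\wedge dy_2=0$, the Frobenius quantity collapses to $d\alpha_i(t)\wedge\alpha_1(t)\wedge\alpha_2(t)=t(1-t)\,\Omega_i\wedge\mu$; the two vanishing factors $t$ and $1-t$ just record that $\VV_0=T\FF_1$, $\VV_1=T\FF_3$ (and $\VV_\infty=T\FF_2$) are integrable by definition of a $3$-web. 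Hence the torsion of $\nablac$ vanishes if and only if $\Omega_i\wedge\mu=0$ for $i=1,2$.

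It then remains to identify $\Omega_i\wedge\mu$ with the difference of the two sides of \eqref{tor_form1}. Writing each four-form as a multiple of $dx_1\wedge dx_2\wedge dy_1\wedge dy_2$ gives $\Omega_i\wedge\mu=\left(\partial_{x_1}\partial_{y_1}w_i\,\mu_{22}-\partial_{x_1}\partial_{y_2}w_i\,\mu_{21}-\partial_{x_2}\partial_{y_1}w_i\,\mu_{12}+\partial_{x_2}\partial_{y_2}w_i\,\mu_{11}\right)dx_1\wedge dx_2\wedge dy_1\wedge dy_2$ with $\mu_{ab}=\partial_{x_a}w_1\,\partial_{y_b}w_2-\partial_{y_b}w_1\,\partial_{x_a}w_2$; on the other side, expanding $\partial_{x_j}\det W_y$ and $\partial_{y_j}\det W_x$ by the product rule and regrouping, every third-order monomial matches up to a single overall sign, so $\Omega_i\wedge\mu=0$ is exactly \eqref{tor_form1}. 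I expect this last piece of bookkeeping to be the only real obstacle: one must keep track of signs and check that the mixed second derivatives $\partial_{x_l}\partial_{y_k}w_b$, which appear on both sides, cancel. A more pedestrian alternative is to read the torsion straight off \eqref{chern_form} as the $x$- and $y$-curls of $C=W_xW_y^{-1}$ and then differentiate the $2\times2$ determinants; the $\VV_t$-route is shorter and, as a bonus, explains why the four a priori torsion components reduce to the two scalar equations \eqref{tor_form1}.
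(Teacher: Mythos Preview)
Your argument is correct. The reduction of the Frobenius condition to $t(1-t)\,\Omega_i\wedge\mu=0$ is clean, and the last ``bookkeeping'' step does go through exactly as you anticipate: in $A_i-B_i$ (the difference of the two sides of \eqref{tor_form1}) the mixed second derivatives of $w_{3-i}$ cancel identically, and what remains is precisely $-\bigl(\partial_{x_1}\partial_{y_1}w_i\,\mu_{22}-\partial_{x_1}\partial_{y_2}w_i\,\mu_{21}-\partial_{x_2}\partial_{y_1}w_i\,\mu_{12}+\partial_{x_2}\partial_{y_2}w_i\,\mu_{11}\bigr)$, i.e.\ minus the coefficient of $\Omega_i\wedge\mu$ in front of the volume form. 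So $\Omega_i\wedge\mu=0$ is equivalent to \eqref{tor_form1}.

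The paper does not spell out a proof; it only says the proposition is ``proved by simple calculations'', evidently meaning the direct computation of the torsion from the explicit coefficients \eqref{chern_form} (your ``pedestrian alternative''). Your route via the $\alpha$-integrability criterion from the proof of Theorem~\ref{thm1} is genuinely different and more conceptual: it avoids differentiating $C=W_xW_y^{-1}$ and its inverse, and it makes transparent why the a priori four torsion components of a $2\times2$ Chern connection collapse to two scalar equations---the factor $t(1-t)$ absorbs the built-in integrability of $\FF_1$ and $\FF_3$, leaving a single $t$-independent $4$-form per index $i$. The direct computation from \eqref{chern_form}, on the other hand, has the advantage of being self-contained (it does not invoke \cite{N}) and would simultaneously confirm the connection formulae \eqref{chern_form} themselves.
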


\begin{proposition}\label{prop2}
If the torsion of $\nablac$ vanishes then the Ricci tensor of $\nablac$ is skew-symmetric and has the following form
$$
Ric(\nablac)=\left(\begin{array}{cc} 0&(R_{ij})_{i,j=1,2}\\ -(R_{ji})_{i,j=1,2}&0\end{array}\right)
$$
where
\begin{equation}\label{curv_form}
R_{ij}=Ric(\nablac)(\partial_{x_i},\partial_{y_j})= -Ric(\nablac)(\partial_{y_j},\partial_{x_i}) =\partial_{y_j}\frac{\partial_{x_i}(\det C)}{\det C}.
\end{equation}
\end{proposition}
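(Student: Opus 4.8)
The plan is to read the curvature off directly from the Christoffel symbols \eqref{chern_form}. The structural point is that $\nablac$ preserves the decomposition $TM=\langle\partial_{x_1},\dots\rangle\oplus\langle\partial_{y_1},\dots\rangle$, the mixed Christoffel symbols vanish, and on the first summand the connection matrix is $\Theta=\big((d_xC)\,C^{-1}\big)^{\top}=(C^{\top})^{-1}d_x(C^{\top})$ — a Maurer--Cartan-type $1$-form in the $x$-variables only, with the $y$'s as parameters — while on the second summand it is $\Phi=-\big(C^{-1}d_yC\big)^{\top}$, a Maurer--Cartan-type $1$-form in the $y$-variables only. Since $\Theta$ satisfies the structure equation $d_x\Theta+\Theta\wedge\Theta=0$, its full curvature is $(d_x+d_y)\Theta+\Theta\wedge\Theta=d_y\Theta$, which has only $dy_j\wedge dx_i$ components; hence $R(\nablac)(\partial_{x_i},\partial_{x_j})=0$, and symmetrically $R(\nablac)(\partial_{y_i},\partial_{y_j})=0$, identically. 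Consequently the only curvature operators are $R(\nablac)(\partial_{x_i},\partial_{y_j})$, acting on the $x$-block as $-\big(\partial_{y_j}((\partial_{x_i}C)C^{-1})\big)^{\top}$ and on the $y$-block as $-\big(\partial_{x_i}(C^{-1}\partial_{y_j}C)\big)^{\top}$.

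Two things follow at once. The diagonal blocks of $Ric(\nablac)$ vanish: in the trace computing $Ric(\nablac)(\partial_{x_i},\partial_{x_j})$ the only term that is not already zero, $R(\nablac)(\partial_{y_k},\partial_{x_i})\partial_{x_j}$, lies in $\langle\partial_{x_t}\rangle$ and so has no $\partial_{y_k}$-component, and similarly on the $y$-block. For the mixed blocks the same trace gives
$$
Ric(\nablac)(\partial_{x_i},\partial_{y_j})=\sum_k\big[\partial_{x_i}(C^{-1}\partial_{y_k}C)\big]^{j}_{\,k},\qquad
Ric(\nablac)(\partial_{y_i},\partial_{x_j})=-\sum_k\big[\partial_{y_i}((\partial_{x_k}C)C^{-1})\big]^{j}_{\,k},
$$
where $[\,\cdot\,]^{j}_{\,k}$ denotes the $(j,k)$ matrix entry.

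At this point torsion-freeness enters decisively: without it the mixed-block traces above are \emph{not} derivatives of $\log\det C$, as one already sees when $C$ is diagonal. Since $\nablac$ is torsion-free by hypothesis (Proposition \ref{prop1}), it satisfies the first Bianchi identity $\sum_{\mathrm{cyc}}R(\nablac)(X,Y)Z=0$. Applied to $(\partial_{y_a},\partial_{y_b},\partial_{x_c})$ and combined with $R(\nablac)(\partial_{y_a},\partial_{y_b})=0$, this yields the symmetry $[\partial_{x_c}(C^{-1}\partial_{y_a}C)]^{b}_{\,t}=[\partial_{x_c}(C^{-1}\partial_{y_b}C)]^{a}_{\,t}$; taking $t$ equal to the summation index collapses the first trace into $\partial_{x_i}\tr(C^{-1}\partial_{y_j}C)=\partial_{x_i}\partial_{y_j}\log\det C$. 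The analogous manipulation with $(\partial_{x_a},\partial_{x_b},\partial_{y_c})$ turns the second trace into $-\partial_{y_i}\partial_{x_j}\log\det C$. Since $\det C=\det W_x/\det W_y$, this is precisely \eqref{curv_form}, and the two expressions are evidently negatives of one another, so $Ric(\nablac)$ is skew-symmetric. (As a cross-check, \eqref{weyl_form} gives $d\omega=\sum_{i,j}\partial_{x_i}\partial_{y_j}\log\det C\;dx_i\wedge dy_j$, consistent with Corollary \ref{cor1} and $\nablac g=\omega\otimes g$.) The only genuinely non-formal ingredient is this use of the Bianchi identity; the rest is the two block curvature computations and keeping track of the transposes.
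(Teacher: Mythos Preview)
Your argument is correct. The paper itself offers no proof beyond the remark that Propositions \ref{prop1} and \ref{prop2} ``are proved by simple calculations'', so there is nothing substantive to compare against; your write-up supplies precisely such a calculation, organised around two clean observations: that the block connection matrices are Maurer--Cartan forms in the $x$- (respectively $y$-) variables alone, so the pure $xx$ and $yy$ curvatures vanish identically, and that the first Bianchi identity (available once $T(\nablac)=0$) converts the mixed Ricci traces into $\partial_{x_i}\partial_{y_j}\log\det C$. Both steps check out; in particular your use of Bianchi on $(\partial_{y_a},\partial_{y_b},\partial_{x_c})$ gives exactly the index symmetry $[\partial_{x_c}(C^{-1}\partial_{y_a}C)]_{bt}=[\partial_{x_c}(C^{-1}\partial_{y_b}C)]_{at}$ needed to collapse $\sum_k[\partial_{x_i}(C^{-1}\partial_{y_k}C)]_{jk}$ to $\partial_{x_i}\tr(C^{-1}\partial_{y_j}C)$, and the dual computation on the $x$-block is handled the same way.
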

Finally, we compute the Lee form $\AAA$ for the hyper-Hermitian structure defined by $(\FF_1,\FF_2,\FF_3)$
\begin{equation}\label{lee_form}
\AAA=\frac{d_x\det W_y}{\det W_y} + \frac{d_y\det W_x}{\det W_x}.
\end{equation}
Thus, it coincides with the Weyl form \eqref{weyl_form}.

\paragraph{Hyper-K\"ahler structures.}
We assume that $[g]$ is a hyper-Hermitian conformal metric of neutral signature on a 4-dimensional manifold. It is well known that the class $[g]$ contains a hyper-K\"ahler metric if and only if the Lee form of $g$ is closed \cite{B}. We shall prove that this hyper-K\"ahler condition can be expressed in term of the Chern connection. Indeed we have
\begin{proposition}\label{prop3}
$d\AAA=0$ if and only if $Ric(\nablac)=0$.
\end{proposition}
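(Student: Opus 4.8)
The plan is to express the Lee form $\AAA$ using the coordinate formula \eqref{lee_form} and the Ricci tensor using \eqref{curv_form}, and then to compute $d\AAA$ directly, observing that the only components of $d\AAA$ that can be nonzero are the mixed $dx_i\wedge dy_j$ terms. Indeed, $\AAA = \frac{d_x\det W_y}{\det W_y} + \frac{d_y\det W_x}{\det W_x}$, so the pure $x$-part of $\AAA$ is $d_x\log\det W_y$ and the pure $y$-part is $d_y\log\det W_x$; both are $d_x$-closed and $d_y$-closed respectively (being exact in the relevant variables), so $d_x$ of the first piece and $d_y$ of the second piece vanish identically. Hence
$$
d\AAA = d_y\!\left(\frac{d_x\det W_y}{\det W_y}\right) + d_x\!\left(\frac{d_y\det W_x}{\det W_x}\right) = \sum_{i,j}\left(\partial_{y_j}\partial_{x_i}\log\det W_y + \partial_{x_i}\partial_{y_j}\log\det W_x\right)dx_i\wedge dy_j.
$$

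The next step is to recognise the coefficient appearing here as exactly $R_{ij}$ from \eqref{curv_form}. Since $C = W_xW_y^{-1}$ we have $\det C = \det W_x/\det W_y$, so $\log\det C = \log\det W_x - \log\det W_y$ and therefore
$$
\partial_{y_j}\frac{\partial_{x_i}(\det C)}{\det C} = \partial_{y_j}\partial_{x_i}\log\det C = \partial_{y_j}\partial_{x_i}\log\det W_x - \partial_{y_j}\partial_{x_i}\log\det W_y.
$$
This differs in sign on the $\det W_y$ term from the coefficient of $dx_i\wedge dy_j$ in $d\AAA$ computed above, so I will need to be careful with the sign bookkeeping; the cleanest route is to note that $\partial_{x_i}\partial_{y_j}\log\det W_x$ and $\partial_{y_j}\partial_{x_i}\log\det W_x$ agree (mixed partials commute), so the comparison reduces to a clean identity relating $R_{ij}$ to the $dx_i\wedge dy_j$-coefficient of $d\AAA$ up to an overall sign and a symmetrisation in $(i,j)$. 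By Proposition \ref{prop2} the matrix $(R_{ij})$ is exactly the off-diagonal block of $Ric(\nablac)$, and $Ric(\nablac)=0$ is equivalent to $R_{ij}=0$ for all $i,j$.

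Putting these together: $d\AAA = 0$ means the coefficient of every $dx_i\wedge dy_j$ vanishes, and after the sign/commutation bookkeeping above this is equivalent to $R_{ij}=0$ for all $i,j=1,2$, which by Proposition \ref{prop2} is equivalent to $Ric(\nablac)=0$. I should also point out, for completeness, that under the torsion-free hypothesis Proposition \ref{prop2} already tells us $Ric(\nablac)$ has no pure $x$ or pure $y$ components, so there is nothing further to check beyond the mixed block. The main obstacle I anticipate is purely organisational rather than conceptual — getting the signs and the symmetrisation in $(i,j)$ exactly right when matching the two-form coefficients against \eqref{curv_form}, and making sure the computation $d\AAA$ is carried out with $d = d_x + d_y$ consistently so that the pure-degree pieces are correctly seen to drop out. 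No genuinely hard step is expected, which is consistent with the paper's remark that these propositions "are proved by simple calculations."
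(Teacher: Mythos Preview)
Your approach is essentially the paper's, but there is a concrete sign slip in your displayed formula for $d\AAA$ that you notice but misdiagnose. When you apply $d_y$ to $d_x\log\det W_y=\sum_i\partial_{x_i}\log\det W_y\,dx_i$ you get $\sum_{i,j}\partial_{y_j}\partial_{x_i}\log\det W_y\,dy_j\wedge dx_i$, and $dy_j\wedge dx_i=-dx_i\wedge dy_j$; so the first term in your sum carries a \emph{minus} sign. With that correction the coefficient of $dx_i\wedge dy_j$ is
\[
-\partial_{y_j}\partial_{x_i}\log\det W_y+\partial_{x_i}\partial_{y_j}\log\det W_x
=\partial_{y_j}\partial_{x_i}\log\det C=R_{ij},
\]
which is exactly \eqref{curv_form}. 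There is no ``symmetrisation in $(i,j)$'' involved anywhere: commutation of mixed partials is irrelevant to the sign issue, and invoking it only obscures the one-line identification $d\AAA=\sum_{i,j}R_{ij}\,dx_i\wedge dy_j$. The paper reaches the same endpoint by using the anticommutation $d_xd_y=-d_yd_x$ to rewrite $d\AAA=d_y\bigl(d_x\log\det W_y-d_x\log\det W_x\bigr)$ directly, which makes the match with \eqref{curv_form} immediate.
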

\begin{proof}
It follows from \eqref{lee_form} that
$$
d\AAA=d_y\left(\frac{d_x\det W_y}{\det W_y} - \frac{d_x\det W_x}{\det W_x}\right).
$$
On the other hand $\det C=\frac{\det W_x}{\det W_y}$ and it follows from \eqref{curv_form} that $Ric(\nablac)=0$ if and only if 
$$
\partial_{y_j}\left(\frac{\partial_{x_i}\det W_x}{\det W_x}-\frac{\partial_{x_i}\det W_y}{\det W_y}\right)=0.
$$
Thus, $d\AAA=0$ if and only if $Ric(\nablac)=0$ indeed.
\end{proof}

As a conclusion we get that in dimension 4 the Pleba\'nski equation derived in Section \ref{sec3} using the Chern connection describes the hyper-K\"ahler structures exactly as in \cite{P}. The Chern connection is the Levi-Civita connection for the hyper-K\"ahler metric in this case, because $\AAA=\omega$.

\paragraph{Acknowledgements.} The work has been partially supported by the Polish National Science Centre grant DEC-2011/03/D/ST1/03902.


\begin{thebibliography}{99}
\bibitem{AG0} M. A. Akivis, V. V. Goldberg, \textit{Semiintegrable almost Grassmann structures}, Differential Geometry and Its Applications, Vol. 10 (3) (1999), 257--294.
\bibitem{AG} M. A. Akivis, V. V. Goldberg, \textit{Differential geometry of webs}, In: Handbook of Differential Geometry, Vol. I, North-Holland, Amsterdam, 2000, 1--152, Chapter~1.
\bibitem{AMT} D. Alekseevsky, C. Medori, A. Tomassini, \textit{Homogeneous para-K\"ahler Einstein manifolds}, Russ. Math. Surv. Vol. 64, No. 1 (2009).
\bibitem{B} C. Boyer, \textit{A note on hyperhermitian four-manifolds}, Proc. Amer. Math. Soc. 102 (1988),
157--164.
\bibitem{Ca} D. Calderbank \textit{Integrable Background Geometries}, SIGMA 10 (2014).
\bibitem{CDT} S. Casey, M. Dunajski, P. Tod, \textit{Twistor geometry of a pair of second order ODEs}, Commun. Math Phys, Vol. 321 (2013), 681--701.
\bibitem{C} S.-S. Chern, \textit{Sur la g\'eom\'etrie d'un syst\'eme d'\'equations diff\'erentialles du second ordre}, Bull.~Sci.~Math. 63 (1939), 206--212.
\bibitem{De} A. Derdzinski, \textit{Connections with skew-symmetric Ricci tensor on surfaces}, Results Math. 52, No. 3-4 (2008), 223--245.
\bibitem{D2} M. Dunajski, \textit{A class of Einstein-Weyl spaces associated to an integrable system of hydrodynamic type}, J. Geom. Phys. 51 (2004), 126--137.
\bibitem{DK1} M. Dunajski, W. Kry\'nski, \textit{Einstein--Weyl geometry, dispersionless Hirota equation and Veronese webs},  Math. Proc. Camb. Phil. Soc., Vol. 157, Issue 01 (2014), 139--150.
\bibitem{DK2} M. Dunajski, W. Kry\'nski, \textit{Point invariants of third-order ODEs and hyper-CR Einstein-Weyl structures}, J. Geom. Phys., Vol. 86 (2014) 296--302.
\bibitem{DW1} M. Dunajski, S. West, \textit{Anti-self-dual conformal structures with null Killing vectors from projective structures}, Commun. Math. Phys. 272 (2007), 85--118.
\bibitem{DW2} M. Dunajski, S. West, \textit{Anti-self-dual conformal structures in neutral signature}, Recent developments in pseudo-Riemannian geometry, 113–148, ESI Lect. Math. Phys., Eur. Math. Soc., Zürich, 2008. 
\bibitem{FK} E. Ferapontov, B. Kruglikov, \textit{Dispersionless integrable systems in 3D and Einstein-Weyl geometry}, J. Differential Geom. Vol. 97, No. 2 (2014), 215--254.
\bibitem{FP} J. Finley, J. Pleba\'nski, \textit{Further heavenly metrics and their symmetries}, J. Math. Phys., Vol. 17, No. 4 (1976), 585--596.
\bibitem{GZ} I. M. Gelfand, I. Zakharevich, \textit{Webs, Veronese curves, and bi-Hamiltonian systems}, Journal of Functional Analysis, vol. 99, no. 1 (1991), 150--178.
\bibitem{G} D. A. Grossman, \textit{Torsion-free path geometries and integrable second order ODE systems}, Selecta Mathematica, Volume 6, Issue 4, (2000), 399--342. 
\bibitem{JK} B. Jakubczyk, W. Kry\'nski, \textit{Vector fields with distributions and invariants of ODEs}, Journal of Geometric Mechanics, Vol. 5, No. 1 (2013).
\bibitem{KM} B. Kruglikov, O. Morozov, \textit{SDiff(2) and uniqueness of the Plebański equation},  Journal of Mathematical Physics, Vol. 53, No. 8 (2012).
\bibitem{K1} W. Kry\'nski, \textit{Geometry of isotypic Kronecker webs}, Central European Journal of Mathematics, Vol. 10 (2012) 1872--1888.
\bibitem{K2} W. Kry\'nski, \textit{Webs and projective structures on a plane}, Differential Geometry and Its Applications, Vol. 37 (2014) 133--140. 
\bibitem{K3} W. Kry\'nski, \textit{Paraconformal structures, ODEs and totally geodesic manifolds}, submitted, arXiv:1310.6855 (2013).
\bibitem{M} T. Mettler, \textit{Reduction of $\beta$-integrable 2-Segre structures}, Comm. Anal. Geom., Vol. 21, No. 2 (2013), 331--353.
\bibitem{N} P. Nagy, \textit{Webs and curvature}, In: Web Theory and Related Topics, Toulouse, December, 1996, World Scientific Publishing, River Edge, 2001, 48--91.
\bibitem{Pa} A. Panasyuk, \textit{Veronese webs for bi-Hamiltonian structures of higher corank}, In: Poisson Geometry, Warsaw, August 3–15, 1998, Banach Center Publ., 51 (2000) 251--261.
\bibitem{Pe} R. Penrose, \textit{Nonlinear gravitons and curved twistor theory}, General Relativity and Gravitation 7 (1976), 31--52.
\bibitem{P} J. Pleba\'nski, \textit{Some solutions of complex Einstein equations}, J. Math. Phys., Vol 16, No. 12 (1975), 2395--2402.
\bibitem{R} M. Randall, \textit{Local obstructions to projective surfaces admitting skew-symmetric Ricci tensor}, J. Geom. Phys. 76 (2014), 192--199.
\bibitem{Z} I. Zakharevich, \textit{Nonlinear wave equation, nonlinear Riemann problem, and the twistor transform of Veronese webs}, arXiv:math-ph/0006001 (2000).
\bibitem{Z1} I. Zakharevich, \textit{Kronecker webs, bihamiltonian structures and the method of argument translation}, Transform. Groups, Vol. 6, No. 3 (2001), 267--300.
\end{thebibliography}
\end{document}